\newtheorem{theorem}{Theorem}[section]
\newtheorem{corollary}[theorem]{Corollary}
\newtheorem{lemma}[theorem]{Lemma}
\theoremstyle{remark}
\numberwithin{equation}{section}
\newcommand{\Z}{\mathbb Z}
\author{Haihong He}
\address{DEPARTMENT OF MATHEMATICS, SHANGHAI UNIVERSITY, SHANGHAI 200444, P. R. CHINA}
\email{$^*$ Corresponding author. xiaoxiawang@shu.edu.cn (X. Wang), hehaihong5@163.com(H. He).}
\author{Xiaoxia Wang$^*$}
\title[Two curious $q$-supercongruences and their extensions]{Two curious $q$-supercongruences and their extensions}
\subjclass[2010]{Primary 33D15; Secondary 11A07, 11B65}
\keywords{congruence; $q$-supercongruence;  cyclotomic
polynomial; $q$-binomial theorem; Karlsson-Minton type summation; creative microscoping.}
\thanks{This work is supported by Natural Science Foundation of Shanghai (22ZR1424100).}
\begin{document}

\begin{abstract}
We prove two single-parameter $q$-supercongruences   which were  recently  conjectured by Guo, and  establish their further extensions with one more parameter.   Crucial ingredients  in the proof are the terminating form of  $q$-binomial theorem    and  a  Karlsson-Minton type summation formula due to  Gasper. Incidentally,  an assertion of Wang, Li and Tang is also verified by establishing its $q$-analogue.
\end{abstract}

\maketitle

\section{Introduction}
 Except for investigating hypergeometric families of Calabi-Yau manifolds,  Rodriguez-Villegas \cite{RodriguezVillegas} also  observed (numerically) many possible supercongruences, including the following  one: for any  odd prime $p$,
\begin{align}\label{eq:Rodriguez-Villegas}
\sum_{k=0}^{p-1}\frac{(\frac{1}{2})_k^2}{k!^2}
\equiv (-1)^{(p-1)/2}
\pmod {p^2}.
\end{align}
Here and in what follows,  the the Pochhammer symbol is defined as and $(x)_0=1$ and  $(x)_n=x(x+1)\cdots(x+n-1)$  with $n$ a  positive integer.
The congruence \eqref{eq:Rodriguez-Villegas} was first proved by Mortenson \cite{Mortenson}.    Later,  Deines et al. \cite{Deines} gave the following generalization of \eqref{eq:Rodriguez-Villegas}: for any integer $d>1$ and prime $p\equiv1 \pmod d$,
\begin{align}
\sum_{k=0}^{p-1}\frac{(\frac{d-1}{d})_k^d}{k!^d}
\equiv-\Gamma_p\big(\tfrac 1d\big)^d
\pmod{p^2},
\label{generalization of RV}
\end{align}
where $\Gamma_p(x)$ denotes the $p$-adic Gamma function (cf. \cite[\S 1.12]{specialfunction}).

For any complex variable $x$ and   integer $n$, define the $q$\emph{-shifted factorial}   as
$$
(x;q)_\infty=\prod_{k=0}^{\infty}(1-xq^k) \quad \text{and}\quad
(x;q)_n=\frac{(x;q)_\infty}{(xq^n;q)_\infty}.
$$
For simplicity,  we also adopt the compact expression $$
(x_1,x_2,\ldots,x_m;q)_n=(x_1;q)_n (x_2;q)_n\cdots (x_m;q)_n.$$
Moreover, $[n]:=[n]_q=1+q+\cdots+q^{n-1}$ denotes the $q$\emph{-integer}.
A $q$-analogue of \eqref{eq:Rodriguez-Villegas} can be found  in Guo and Zeng \cite[Corollary 2.4]{GuoZhen2014}.
Recently,  Guo \cite[Theorem 1.1]{Guoqdrrama2022} established a $q$-extension of \eqref{generalization of RV}: for any integers $d, n>1$ with $n\equiv1 \pmod d$,
\begin{align} \label{aa}
\sum_{k=0}^{n-1}\frac{(q^{d-1};q^d)_k^dq^{dk}}{(q^d;q^d)_k^d}
\equiv \frac{(q^d;q^d)_{(d-1)(n-1)/d}q^{(d-1)(n-1)(d+n-1)/(2d)}}{(q^d;q^d)_{(n-1)/d}^{d-1}(-1)^{(d-1)(n-1)/d}}\pmod{\Phi_n(q)^2}.
\end{align}
Here and in what follows, the $n$-th \emph{cyclotomic polynomial} $\Phi_n(q)$ is given by
\begin{align*}
\Phi_n(q)=\prod_{\substack{1\leq k \leq n \\ \gcd(k,n)=1}}(q-\zeta^k),
\end{align*}
where $\zeta$  is an $n$-th primitive root of unity.
In the same paper, Guo also presented two analogous $q$-supercongruences as follows.\\
$(\romannumeral1)$ For any odd integer $d\geq3$ and  integer $n$ with $n\equiv-1 \pmod d$ and $n\geq 2d-1$,
\begin{align}\label{odd}
&\sum_{k=0}^{n-1}\frac{(q^{d+1};q^d)_k^{d-1}(q^{1-d};q^d)_kq^{dk}}{(q^d;q^d)_k^d}\notag\\
&\equiv \frac{(1-q)(1-q^{d-1})(q^d;q^d)_{n-1-(n+1)/d}}{-(q^d;q^d)_{(n+1)/d}^{d-1}}q^{(d(d+n)(n+1)-(n+1)^2)/(2d)-1}\pmod{\Phi_n(q)^2}.
\end{align}
$(\romannumeral2)$ For any even integer $d\geq4$ and positive integer $n$ with $n\equiv-1 \pmod d$,
\begin{align}
&\sum_{k=0}^{n-1}\frac{(q^{d+1};q^d)_k^{d-2}(q;q^d)_k^2q^{dk}}{(q^d;q^d)_k^d}\notag\\
&\equiv \frac{(1-q)^2(q^d;q^d)_{n-1-(n+1)/d}}{(-1)^{(n+1)/d}(q^d;q^d)_{(n+1)/d}^{d-1}}q^{(d(d+n)(n+1)-(n+1)^2)/(2d)-2}\pmod{\Phi_n(q)^2}.
\label{similar to}
\end{align}
For  recent progress on congruences and $q$-congruences, we recommend the  literatures \cite{XuWang, GuoA2, GuoSchlosser, XuWang1, liuwang22, WangPan, WangYu, Wei, gn, glm, Weica} to  readers.

As the complements of \eqref{odd} and \eqref{similar to}, Guo \cite[Conjectures 5.2 and 5.3]{Guoqdrrama2022} proposed   two $q$-supercongruences lined as the following two theorems respectively.   Moreover,  we shall establish the common generalization of \eqref{odd} and \eqref{eq:conjecture 5.2} and also that of \eqref{similar to} and  \eqref{eq:conjecture 5.3}  with two   parameters.


\begin{theorem}[6, Conjecture 5.2]\label{thm:2}
Let $d\geq4$ be an even integer and $n$ an integer with $n\equiv-1\pmod d$ and $n\geq2d-1$. Then,
\begin{align}
&\sum_{k=0}^{n-1}\frac{(q^{d+1};q^d)_k^{d-1}(q^{1-d};q^d)_kq^{dk}}{(q^d;q^d)_k^d}\notag\\
&\equiv \frac{(1-q)(1-q^{d-1})(q^d;q^d)_{n-1-(n+1)/d}}{-(-1)^{(n+1)/d}(q^d;q^d)_{(n+1)/d}^{d-1}}q^{(d(d+n)(n+1)-(n+1)^2)/(2d)-1}\pmod{\Phi_n(q)^2}.
\label{eq:conjecture 5.2}
\end{align}
\end{theorem}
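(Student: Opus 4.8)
The plan is to prove Theorem~\ref{thm:2} by the creative microscoping method of Guo and Zudilin, in parallel with the treatment of \eqref{odd} and \eqref{similar to}, and simultaneously to obtain the common generalization of \eqref{odd} and \eqref{eq:conjecture 5.2} promised above. First I would introduce a free parameter $a$ and set
\begin{align*}
F_n(a)=\sum_{k=0}^{n-1}\frac{(aq^{d+1},q^{d+1}/a;q^d)_k\,(q^{d+1};q^d)_k^{d-3}\,(q^{1-d};q^d)_k\,q^{dk}}{(q^d;q^d)_k^d},
\end{align*}
where $(q^{d+1};q^d)_k^{0}$ is understood as $1$, so that the case $d=3$ is also covered; then $F_n(1)$ is the left-hand side of \eqref{eq:conjecture 5.2} for every $d\ge3$. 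The principal step is a parametric $q$-congruence
\begin{align*}
F_n(a)\equiv C_n(a)\pmod{(1-aq^n)(a-q^n)},
\end{align*}
understood as a congruence between rational functions of $a$ whose denominators are coprime to the modulus, where $C_n(a)$ is an explicit closed form whose value at $a=1$ equals the right-hand side of \eqref{eq:conjecture 5.2} for even $d$ and of \eqref{odd} for odd $d$.

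To prove this parametric congruence it suffices to check that $F_n(a)$ and $C_n(a)$ agree at $a=q^{n}$ and at $a=q^{-n}$, since the difference then vanishes at both zeros of the quadratic $(1-aq^n)(a-q^n)$ and so is divisible by it. At each of these specializations the factor $(aq^{d+1},q^{d+1}/a;q^d)_k$ supplies a power $q^{\pm n}$ that turns $F_n$ into a terminating basic hypergeometric series with base $q^d$; since each upper parameter $q^{d+1}$ exceeds its companion lower parameter $q^d$ by the single factor $q$, this series is of Karlsson-Minton type and is summed in closed product form by Gasper's Karlsson-Minton summation, the residual factor $(q^{1-d};q^d)_k$ being handled together with the terminating form of the $q$-binomial theorem. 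Comparing the two resulting products with $C_n(q^{\pm n})$ pins down $C_n(a)$ and completes this step.

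It then remains to specialize $a\to1$. Since $n>1$ we have $\Phi_n(q)\mid 1-q^n$, so $\Phi_n(q)^2\mid(1-q^n)^2$ in $\mathbb Z[q]$, while the factor $(q^d;q^d)_{(n+1)/d}^{\,d-1}$ in the denominator of $C_n(1)$ is coprime to $\Phi_n(q)$ because $n\equiv-1\pmod d$ forces $n\nmid dj$ for $1\le j\le(n+1)/d$; hence no cancellation obstructs the limit. Thus $a=1$ converts the modulus $(1-aq^n)(a-q^n)$ into $(1-q^n)^2$, and then into $\Phi_n(q)^2$, giving $F_n(1)\equiv C_n(1)\pmod{\Phi_n(q)^2}$, which is \eqref{eq:conjecture 5.2}. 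The entire even/odd dichotomy is confined to the simplification of $C_n(1)$: Gasper's product carries a sign of the form $(-1)^{(d-1)(n+1)/d}$, which is trivial for odd $d$ but equals $(-1)^{(n+1)/d}$ for even $d$, exactly the discrepancy between \eqref{odd} and \eqref{eq:conjecture 5.2}.

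The main obstacle I anticipate is twofold. First, fixing the precise insertion of $a$ so that simultaneously $F_n(1)$ reproduces the sum in \eqref{eq:conjecture 5.2} and the specializations $a=q^{\pm n}$ produce series to which Gasper's formula applies verbatim; a different placement of $a$, or a preliminary transformation of the underlying ${}_{d}\phi_{d-1}$, may be required. Second, the delicate bookkeeping matching Gasper's product with the stated closed form, namely recovering the $q$-power $q^{(d(d+n)(n+1)-(n+1)^2)/(2d)-1}$ and the exact quotient of $q^d$-shifted factorials from the relation $n+1\equiv0\pmod d$, and carrying the parity sign correctly through every reindexing; this is routine in principle but is where most of the effort will go.
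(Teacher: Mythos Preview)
Your proposal has a genuine gap at the heart of the argument: the claim that at $a=q^{\pm n}$ the series $F_n(a)$ becomes a Karlsson--Minton sum is not correct, and the aside that the factor $(q^{1-d};q^d)_k$ is ``handled together with the terminating form of the $q$-binomial theorem'' does not repair it. Gasper's summation \eqref{eq:KM} requires every upper parameter to be of the form $b_j(q^d)^{n_j}$ with $n_j\ge0$ matched against a lower parameter $b_j$, together with a terminating factor $(q^d)^{-N}$; the factor $(q^{1-d};q^d)_k$ fits neither role. With your two-slot insertion of $a$, the specialization $a=q^{\pm n}$ produces $(q^{d+1\mp n};q^d)_k$, and since $n\equiv-1\pmod d$ one has $d+1\mp n\equiv 2\pmod d$, so this factor is not $(q^d)^{-N}$ for any integer $N$ and does not truncate the sum; nor can $(q^{1-d};q^d)_k$ be paired with any $(q^d;q^d)_k$ as $b_j(q^d)^{n_j}/b_j$. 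In short, the specialized series is not of Karlsson--Minton type, and there is no single closed product $C_n(q^{\pm n})$ coming out of \eqref{eq:KM} to compare with.

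The paper proceeds quite differently. It does \emph{not} microscope the sum in \eqref{eq:conjecture 5.2} directly. Instead it uses the elementary identity
\[
(q^{d+1},q^{1-d};q^d)_k=-q[d-1]\Bigl(1+\frac{1-q^d}{q^d-q^{dk+1}}\Bigr)(q;q^d)_k^2
\]
to split the left-hand side of \eqref{eq:conjecture 5.2} as
\[
[d]\sum_{k=0}^{n-1}\frac{(q^{d+1};q^d)_k^{d-2}(q,q^{1-d};q^d)_kq^{dk}}{(q^d;q^d)_k^d}
\;-\;q[d-1]\sum_{k=0}^{n-1}\frac{(q^{d+1};q^d)_k^{d-2}(q;q^d)_k^2q^{dk}}{(q^d;q^d)_k^d}.
\]
The second sum is exactly the one already evaluated in \eqref{similar to}. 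The first sum is shown to be $\equiv0\pmod{\Phi_n(q)^2}$ in Lemma~\ref{lemma:1}; that lemma \emph{is} proved by creative microscoping, but the specialized series is evaluated to zero via the terminating $q$-binomial theorem (a degree count on a polynomial in $q^{dk}$), not by a Karlsson--Minton product. The obstructive factor $(q^{1-d};q^d)_k$ is precisely what lowers the degree enough to make that vanishing argument work. If you want to salvage your outline, the missing idea is this decomposition: isolate the troublesome factor into a piece that vanishes modulo $\Phi_n(q)^2$, and reduce the remainder to \eqref{similar to}.
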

\begin{theorem}[6, Conjecture 5.3]\label{theorem:1}
Let $d\geq3$ be an odd integer and $n$ a positive integer with $n\equiv-1\pmod d$. Then,
\begin{align}
&\sum_{k=0}^{n-1}\frac{(q^{d+1};q^d)_k^{d-2}(q;q^d)_k^2q^{dk}}{(q^d;q^d)_k^d}\notag\\
&\equiv \frac{(1-q)^2(q^d;q^d)_{n-1-(n+1)/d}}{(q^d;q^d)_{(n+1)/d}^{d-1}}q^{(d(d+n)(n+1)-(n+1)^2)/(2d)-2}\pmod{\Phi_n(q)^2}.
\label{eq:conjecture 5.3}
\end{align}
\end{theorem}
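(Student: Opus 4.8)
The plan is to follow the \emph{creative microscoping} method of Guo and Zudilin: introduce an extra parameter $a$, prove a parametrized $q$-congruence modulo the product $(1-aq^n)(a-q^n)$ of three factors (which specializes to $\Phi_n(q)^2$ when $a=1$ and $n$ is such that $\Phi_n(q)\mid(1-q^n)$), and then let $a\to1$. Concretely, I would first establish a two-parameter generalization: replace one occurrence of $q^{d+1}$ (or one of the $q$-factors in the numerator) by $aq$ and the ``reciprocal'' occurrence by $q/a$, obtaining a sum of the shape
\begin{align*}
\sum_{k=0}^{n-1}\frac{(aq,q/a;q^d)_k(q^{d+1};q^d)_k^{d-3}(q;q^d)_k^{2}q^{dk}}{(q^d;q^d)_k^{d}}
\end{align*}
for Theorem \ref{theorem:1} (and the analogous mixed version for Theorem \ref{thm:2}), and prove it is congruent to the expected closed form modulo $[n](1-aq^n)(a-q^n)$ as polynomials in $a$ over $\mathbb Z[q]$. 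Letting $a\to1$ and noting that $\Phi_n(q)^2\mid[n](1-q^n)$ when $n\equiv-1\pmod d$ (here one uses $n\geq 2d-1$ in Theorem \ref{thm:2} to guarantee enough room) then yields the desired supercongruences; this simultaneously produces the promised common generalizations of \eqref{odd} with \eqref{eq:conjecture 5.2} and of \eqref{similar to} with \eqref{eq:conjecture 5.3}.

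The engine for the parametrized identity is the combination advertised in the abstract: when $a=q^{-m}$ or $a=q^n$, the sum truncates to a (very well-poised or Karlsson--Minton type) terminating series that can be summed in closed form. For the $a$-side I expect to invoke Gasper's Karlsson--Minton type summation to evaluate the sum at $a=q^n$ and at $a=1/q^n$ (equivalently, to show both $1-aq^n$ and $a-q^n$ divide the difference), while the terminating $q$-binomial theorem handles the degenerate pieces and the bookkeeping of the prefactor; the key point is that at these special values of $a$ the $d$-fold product collapses because $(q/a;q^d)_k=(q^{1-d\,\text{(something)}};q^d)_k$ vanishes for $k$ beyond a threshold, reducing the $d$-dimensional sum to a one-dimensional summable one. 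One then checks that the resulting closed forms match the claimed right-hand side after the substitution, and that the third factor (the $[n]$, responsible for upgrading modulus $\Phi_n(q)$ to $\Phi_n(q)^2$ in the limit) is accounted for by a routine degree/divisibility argument as in Guo's earlier work.

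Two points require care. First, the ``reduction to a summable series'' step: I must choose the placement of the parameters $a,1/a$ among the $d$ numerator factors so that \emph{both} specializations $a=q^{\pm n}$ give genuinely terminating, \emph{Gasper-summable} series — this is the step most likely to be delicate, since a naive placement yields a sum with no closed form, and the odd-versus-even parity of $d$ forces slightly different choices in the two theorems (this is exactly why \eqref{odd}/\eqref{eq:conjecture 5.2} and \eqref{similar to}/\eqref{eq:conjecture 5.3} have the shapes they do). Second, after obtaining congruence modulo $(1-aq^n)(a-q^n)$ one still needs the extra factor $[n]$; the standard fix is to show, via the $q$-binomial theorem, that the partial sum up to $n-1$ already vanishes mod $\Phi_n(q)$ for the \emph{full} range and then promote to $\Phi_n(q)^2$ using that $(1-aq^n)(a-q^n)$ and $[n]$ together are coprime-complete at $a=1$. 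I would expect the verification of the incidental assertion of Wang, Li and Tang to fall out of the $a\to1$, $q\to1$ degeneration of the same two-parameter identities. The main obstacle, to reiterate, is engineering the parameter placement so that Gasper's formula actually applies at both special values simultaneously.
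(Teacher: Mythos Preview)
Your high-level plan—creative microscoping plus Gasper's Karlsson--Minton summation, then $a\to1$—is the paper's route, but two points in your execution are off.

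\textbf{The parametrization.} Inserting $a$ and $a^{-1}$ into only two factors will not produce a Karlsson--Minton sum at $a=q^{\pm n}$ (and your displayed sum does not even specialize to the original at $a=1$: it gives $(q;q^d)_k^4(q^{d+1};q^d)_k^{d-3}$ instead of $(q;q^d)_k^2(q^{d+1};q^d)_k^{d-2}$). Gasper's formula \eqref{eq:KM} needs each numerator factor to be $b_jq^{n_j}$ with the matching $b_j$ in the denominator; with $d-2$ undeformed copies of $(q^{d+1};q^d)_k/(q^d;q^d)_k$ that structure is absent. The paper deforms \emph{all} $d$ numerator factors and all $d-1$ non-trivial denominator factors simultaneously: for $d>3$ the numerator becomes
\[
(a^{d-1}q^{d+1},a^{d-3}q^{d+1},\ldots,a^4q^{d+1},a^2q,\,q^{d+1},\,a^{-2}q,a^{-4}q^{d+1},\ldots,a^{1-d}q^{d+1};q^d)_k
\]
and the denominator $(a^{d-2}q^d,a^{d-4}q^d,\ldots,a^{2-d}q^d;q^d)_k(q^d;q^d)_k$ (a separate variant handles $d=3$). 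At $a=q^{\pm n}$ one then has $b_j=q^{d-(d-2j)n}$ with $n_j\in\{(n+1)/d,(n+1)/d-1\}$ and $\sum n_j=N=n-1-(n+1)/d$, so \eqref{eq:KM} applies directly. The ``delicate placement'' you worry about is therefore not a choice among two slots but a global staircase of powers $a^{d-1},a^{d-3},\ldots,a^{1-d}$.

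\textbf{The modulus.} Your talk of a third factor $[n]$ to ``upgrade $\Phi_n(q)$ to $\Phi_n(q)^2$'' is unnecessary. For a $\Phi_n(q)^2$ result one only needs the parametric congruence modulo $(1-aq^n)(a-q^n)$; at $a=1$ this already contains $\Phi_n(q)^2$. No separate $[n]$-divisibility step—and no use of the terminating $q$-binomial theorem—enters the proof of Theorem~\ref{theorem:1}; the $q$-binomial theorem is used only for Lemma~\ref{lemma:1}, which feeds Theorem~\ref{thm:2}.
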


Particularly,  letting $n=p$ be an odd prime and $q\rightarrow1$ in Theorems \ref{thm:2} and \ref{theorem:1}, we arrive at Guo's two congruences \cite[(5.4) and (5.5)]{{Guoqdrrama2022}}, which have been successfully proved by Wang, Li and Tang \cite{WangLiTang}. Besides,  the authors \cite{WangLiTang}  conjectured that:
for  any integers $d\geq2$, $n$  with $n\equiv -1 \pmod d$ and $n\geq 2d-1$,
\begin{align}\label{equality-Wang-Li-Tang}
\frac{{(n-1)!}^dd^{dn-d}}{n^2}
\sum_{k=0}^{n-1}\frac{(\frac{d+1}{d})_k^{d-2}(\frac{1}{d})_k(\frac{1-d}{d})_k}{(1)_k^d}\in \Z.
\end{align}
The  assertion \eqref{equality-Wang-Li-Tang} shall be proved  by determining the following $q$-congruence.
\begin{theorem}\label{Theorem-Wang-Li-Tang}
Let $d\geq2$ and $n$ be integers with $n\equiv -1 \pmod d$ and $n\geq 2d-1$. Then,
\begin{align*}
\frac{(q^d;q^d)_{n-1}^d}{(1-q)^{dn-d}} \sum_{k=0}^{n-1}\frac{(q^{d+1};q^d)_k^{d-2}(q,q^{1-d};q^d)_kq^{dk}}{(q^d;q^d)_k^d}
\equiv 0\pmod{[n]^2}.
\end{align*}
\end{theorem}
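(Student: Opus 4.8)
The plan is to localize the congruence at the cyclotomic polynomials dividing $[n]$. Put $m=(n+1)/d$, so $n=md-1$ with $m\ge2$, and abbreviate
\[
\Sigma:=\sum_{k=0}^{n-1}\frac{(q^{d+1};q^d)_k^{d-2}(q,q^{1-d};q^d)_kq^{dk}}{(q^d;q^d)_k^d},
\]
so that the assertion is $P:=\dfrac{(q^d;q^d)_{n-1}^d}{(1-q)^{dn-d}}\,\Sigma\equiv0\pmod{[n]^2}$. Since $[n]^2=\prod_{e\mid n,\,e>1}\Phi_e(q)^2$ and distinct cyclotomic polynomials are coprime, it suffices to prove $\Phi_e(q)^2\mid P$ for every divisor $e>1$ of $n$.

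For a \emph{proper} divisor $1<e<n$ this comes for free from the prefactor. Indeed $\dfrac{(q^d;q^d)_{n-1}^d}{(1-q)^{dn-d}}=\bigl(\prod_{j=1}^{n-1}[dj]\bigr)^d$, and because $\gcd(d,e)=1$ the polynomial $\Phi_e(q)$ divides $[dj]=(1-q^{dj})/(1-q)$ exactly when $e\mid j$; hence $\Phi_e(q)$ divides the prefactor to the power $d\,(n/e-1)\ge2$ (using $d\ge2$ and $n/e\ge2$). It then only remains to check that $\Sigma$ contributes no pole at $\Phi_e(q)$, i.e.\ that each summand has nonnegative $\Phi_e(q)$-adic valuation; this is exactly where $n\equiv-1\pmod d$ is used, as it forces, for $q$ a primitive $e$-th root of unity, the numerator factors $(q^{d+1};q^d)_k$, $(q;q^d)_k$, $(q^{1-d};q^d)_k$ to vanish often enough to absorb the $d$ copies of $(q^d;q^d)_k$ in the denominator for every $k\le n-1$.

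The real content is the divisor $e=n$. Here no factor $[dj]$ with $1\le j\le n-1$ is divisible by $\Phi_n(q)$, so the prefactor is a unit mod $\Phi_n(q)$ and we are reduced to the pure $q$-supercongruence $\Sigma\equiv0\pmod{\Phi_n(q)^2}$. The key remark is that $q^{1-d}\equiv q^{-dm(d-1)}\pmod{\Phi_n(q)}$, so at a primitive $n$-th root of unity the factor $(q^{1-d};q^d)_k$ annihilates every term with $k\ge m(d-1)+1$, and $\Sigma$ collapses to a \emph{terminating} basic hypergeometric series of Karlsson--Minton type. The plan is to prove a stronger parametrized congruence by creative microscoping: introduce a parameter $a$ (for instance by replacing $q^{1-d}$ with $aq^{1-d}$), evaluate the refined series in closed form modulo $(1-aq^n)(a-q^n)$ by means of the terminating $q$-binomial theorem together with Gasper's Karlsson--Minton summation, and then specialize $a\to1$; one has to verify that the resulting closed form is divisible by $\Phi_n(q)^2$, the extra factor of $\Phi_n(q)$ being produced by the degeneration $a=1$, and it is here that $n\ge2d-1$ enters. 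An equivalent hands-on route, meshing with the two-parameter extensions proved earlier, is to write $\Sigma=\Sigma_1+(\Sigma-\Sigma_1)$ with $\Sigma_1$ the series of Theorem~\ref{theorem:1} (respectively of \eqref{similar to}, and a classical evaluation for $d=2$), whose residue mod $\Phi_n(q)^2$ is already known; the identity $(q^{1-d};q^d)_k-(q;q^d)_k=-q^{1-d}(1-q^{dk})(q;q^d)_{k-1}$ for $k\ge1$, followed by an index shift, turns $\Sigma-\Sigma_1$ into a shorter sum that Gasper's formula evaluates, and combining the two evaluations makes the explicit nonzero right-hand side of Theorem~\ref{theorem:1} cancel, leaving $\Sigma\equiv0\pmod{\Phi_n(q)^2}$.

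I expect the main obstacle to be precisely this last step: producing the correct one-parameter refinement of $\Sigma$ whose closed-form evaluation via Gasper's Karlsson--Minton summation is clean, and then confirming that specializing $a\to1$ picks up the additional factor $\Phi_n(q)$ — equivalently, that the closed form vanishes to order two at primitive $n$-th roots of unity. Granting that, the proper-divisor case is immediate, Theorem~\ref{Theorem-Wang-Li-Tang} follows, and letting $q\to1$ recovers the integrality assertion \eqref{equality-Wang-Li-Tang} of Wang, Li and Tang.
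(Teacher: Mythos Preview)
Your high-level plan---localize at each $\Phi_e(q)$ dividing $[n]$, use the prefactor for proper divisors $e<n$, and reduce the case $e=n$ to $\Sigma\equiv0\pmod{\Phi_n(q)^2}$---is exactly the paper's structure; the prefactor argument is the paper's \eqref{n} and the factorization of $[n]$ is the one invoked at the end. Where your sketch diverges is in how $\Sigma\equiv0\pmod{\Phi_n(q)^2}$ is actually proved.

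The paper does \emph{not} produce a nonzero closed form and then check it is divisible by $\Phi_n(q)^2$; it shows the parametrized sum is literally $0$ at $a=q^{\pm n}$. For $d\ge4$ this is Lemma~\ref{lemma:1} with $r=1$: the parameter $a$ is spread symmetrically across \emph{all} Pochhammer blocks (numerator entries $a^{d-1}q^{d+1},a^{d-3}q^{d+1},\dots$ together with matching denominator entries), not via a single replacement $q^{1-d}\to aq^{1-d}$. That $a\leftrightarrow a^{-1}$ symmetry is precisely what lets one computation handle both $a=q^n$ and $a=q^{-n}$; your one-sided insertion would only catch one of the two factors of $(1-aq^n)(a-q^n)$. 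The specialized sum is then rewritten as $\sum_k(-1)^k q^{d\binom{N-k}{2}}\genfrac[]{0pt}{}{N}{k}_{q^d}P(q^{dk})$ with $\deg P\le N-1$ and killed by the terminating $q$-binomial theorem alone---Gasper's Karlsson--Minton formula plays no role here. The degree bound is where $n\ge 2d-1$ enters.

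That argument genuinely requires $d\ge4$, so $d=2,3$ are handled separately. For these the paper uses the identity \eqref{23}, which expresses $[d]\,\Sigma$ as a combination of the sums in \eqref{aa}, \eqref{odd}, \eqref{similar to}, \eqref{eq:conjecture 5.3}; inserting the already-established values of those sums modulo $\Phi_n(q)^2$ gives exact cancellation to $0$. Your alternative decomposition via $(q^{1-d};q^d)_k-(q;q^d)_k=-q^{1-d}(1-q^{dk})(q;q^d)_{k-1}$ is a valid identity, but after the index shift you land on a series with base points $(q^{2d+1};q^d)_k$ over $(q^{2d};q^d)_k$ that is not directly a Karlsson--Minton instance; the paper's route through \eqref{23} avoids this by factoring $(q^{d+1},q^{1-d};q^d)_k$ in terms of $(q;q^d)_k^2$ and keeps all three sums on the same footing.
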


Clearly, \eqref{equality-Wang-Li-Tang} follows by letting $q \rightarrow 1$ in Theorem \ref{Theorem-Wang-Li-Tang}.

The rest of this paper is  organized as follows. We shall employ the terminating form of  $q$-binomial theorem  to  confirm Theorems \ref{thm:2} and \ref{Theorem-Wang-Li-Tang} in Section 2. Section 3 is devoted to proving Theorem \ref{theorem:1} by means of Gasper's Karlsson-Minton type summation formula.   In Section 4, we shall present two-parametric  extensions of \eqref{odd} and \eqref{eq:conjecture 5.2}, together with \eqref{odd}  and  \eqref{eq:conjecture 5.3}. The `creative microscoping' method introduced by Guo and Zudilin \cite{qmicroscope2019} will also be  employed in the proofs.

\section{Proofs of Theorems \ref{thm:2} and \ref{Theorem-Wang-Li-Tang}}
To prove Theorems \ref{thm:2} and \ref{Theorem-Wang-Li-Tang},  we require the following key result.
\begin{lemma}\label{lemma:1}
Let $d, r$ be positive integers with $d\geq3+r$ and $\gcd(d,r)=1$. Let $n$ be an integer such that $n\equiv-r\pmod d$ and $n\geq 2d-r$. Then,
\begin{align}
\sum_{k=0}^{n-1}\frac{(q^{d+r};q^d)_k^{d-r-1}(q^r;q^d)_k^{r}(q^{r-d};q^d)_kq^{dk}}{(q^d;q^d)_k^d}
\equiv 0\pmod{\Phi_n(q)^2}.
\label{eq:lemma1}
\end{align}
Especially,  for integers $d\geq 4$ and $n\geq 2d-1$ with $n\equiv-1\pmod d$, we have,
\begin{align}\label{r=1}
\sum_{k=0}^{n-1}\frac{(q^{d+1};q^d)_k^{d-2}(q,q^{1-d};q^d)_kq^{dk}}{(q^d;q^d)_k^d}
\equiv 0\pmod{\Phi_n(q)^2}.
\end{align}
\end{lemma}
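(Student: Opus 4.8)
The plan is to prove the more general congruence \eqref{eq:lemma1}, from which \eqref{r=1} is the special case $r=1$. First I would set $m=(n-r)/d$, so that $n\equiv -r \pmod d$ means the entries $q^{r-d}$ and $q^r$ interact with the upper bound in a favorable way. The key structural observation is that the summand contains the factor $(q^{r-d};q^d)_k$, which vanishes identically once $k>(d-r)/d$ is large enough — more precisely $(q^{r-d};q^d)_k=0$ for $k\geq$ the first index where $q^{r-d+dk}=1$, i.e. at $k=(d-r)/d$... but since $d\geq 3+r$, one has $(d-r)/d<1$, so actually I should be more careful: the relevant truncation is that $(q^{r-d};q^d)_k$ does not vanish for the range of $k$ we care about, but the sum is genuinely terminating at $n-1$ because of the cyclotomic behaviour mod $\Phi_n(q)$. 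So the real mechanism is different: I would show the sum, viewed modulo $\Phi_n(q)^2$, can be evaluated in closed form and that the closed form is divisible by $\Phi_n(q)^2$ (equivalently, it vanishes).

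The concrete route I expect to work: apply the terminating form of the $q$-binomial theorem (one of the two tools advertised in the abstract) to collapse one or more of the products in the summand. Since there are $d-r-1$ copies of $(q^{d+r};q^d)_k$ upstairs, $r$ copies of $(q^r;q^d)_k$, one copy of $(q^{r-d};q^d)_k$, and $d$ copies of $(q^d;q^d)_k$ downstairs, the total "dimension" upstairs is $d-r-1+r+1=d$, matching the $d$ copies downstairs, so the sum is a well-poised-type ${}_d\phi_{d-1}$ series evaluated at $q^d$. The idea is to pair up the numerator parameters with $q^r$ (the "small" parameter) in a way that lets a $q$-binomial sum telescope, producing a single $q$-shifted factorial in the answer; then one checks that this answer contains the factor $(q^d;q^d)_{n-1-m}$ or a shifted version which, because $n-1-m\geq n/2$ roughly, straddles enough multiples of $n$ to be divisible by $\Phi_n(q)^2$. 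More precisely, $\Phi_n(q)\mid 1-q^{jn}$ trivially, but divisibility by the \emph{square} must come from the interplay of the numerator and denominator $q$-factorials: the numerator will contribute one factor of $[n]$ (hence $\Phi_n(q)$) from a term $1-q^{an}$ and the cancellation pattern in the denominator will fail to remove it, and a second factor of $\Phi_n(q)$ will come from the derivative/second-order term which is where the hypothesis $n\geq 2d-r$ is used to guarantee the relevant index range.

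The step I expect to be the main obstacle is precisely pinning down the closed-form evaluation of the truncated sum modulo $\Phi_n(q)^2$. Naively the $q$-binomial theorem gives an exact identity only for a genuinely terminating $q$-series with the right parameter balance, and here the termination happens only modulo $\Phi_n(q)$ (through $[n]\equiv 0$), so I will need to work in the ring $\mathbb{Z}[q]/\Phi_n(q)^2$ and argue that the "extra" tail terms from $k\geq$ some bound are each divisible by $\Phi_n(q)^2$ individually — this is where the condition $d\geq 3+r$ enters, ensuring there are enough denominator factors $(q^d;q^d)_k$ to absorb the lack of numerator vanishing. Once the closed form is in hand, checking its divisibility by $\Phi_n(q)^2$ should be a routine (if slightly tedious) count of how many factors of the form $1-q^{dn\cdot(\text{something})}$ survive in numerator minus denominator; I would organize this by writing everything in terms of $[n]$ and tracking the $q$-power exponents modulo $n$. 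As a sanity check at the very end, setting $r=1$ recovers \eqref{r=1}, and letting $q\to 1$ in \eqref{r=1} should be consistent with the $p$-adic statements Guo recorded.
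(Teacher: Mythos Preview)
Your proposal correctly lists two of the ingredients---the terminating $q$-binomial theorem, and the fact that the tail terms for $k\ge n-(n+r)/d$ vanish individually modulo $\Phi_n(q)^2$ (indeed $d\ge r+3$ gives $d-r-1\ge 2$ copies of $(q^{d+r};q^d)_k$, each contributing a factor $1-q^{(d-1)n}$ in that range)---but the mechanism you propose for the main part of the sum does not work. There is no ``closed form'' for the truncated sum of the kind you describe; the $q$-binomial theorem does not collapse this ${}_d\phi_{d-1}$ to a product of $q$-shifted factorials, and your plan of tracking factors $1-q^{jn}$ in a putative product and squeezing out $\Phi_n(q)^2$ has no identity to stand on. The ``derivative/second-order term'' you invoke for the second factor of $\Phi_n(q)$ is exactly the step that is missing a concrete realisation.

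What the paper actually does, and what your plan lacks, is the \emph{creative microscoping} step. One inserts an auxiliary parameter $a$ into the sum symmetrically in $a\leftrightarrow a^{-1}$ (schematically, replacing the numerator/denominator pairs by $(a^{d-2j-1}q^{d+r};q^d)_k/(a^{d-2j}q^d;q^d)_k$) and proves the parametric statement modulo $(1-aq^n)(a-q^n)$. At $a=q^{\pm n}$ one numerator factor becomes $(q^{d+r-(d-1)n};q^d)_k$, so the series terminates \emph{exactly} at $k=N:=n-1-(n+r)/d$, not merely modulo $\Phi_n(q)$. After rewriting $(q^r;q^d)_k^r(q^{r-d};q^d)_k=(1-q^{r-d})(1-q^r)^{r+1}(q^{d+r};q^d)_{k-2}^{r+1}(1-q^{dk-d+r})^r$ and using $(x;q^d)_k/(xq^{-n};q^d)_k=(xq^{dk-n};q^d)_{(n+r)/d}/(xq^{-n};q^d)_{(n+r)/d}$ on each remaining pair, the whole sum becomes
\[
\sum_{k=0}^{N}(-1)^k\,q^{d\binom{N-k}{2}}{N\brack k}_{q^d}P(q^{dk}),
\]
where $P$ is a polynomial in $q^{dk}$ of degree exactly $N-1$; the shift $k\mapsto k-2$ coming from the $(q^{r-d};q^d)_k$ factor is precisely what drops the degree below $N$. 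The identity $\sum_{k=0}^{N}(-1)^k{N\brack k}q^{\binom{N-k}{2}+jk}=0$ for $0\le j\le N-1$ then kills the sum \emph{identically}, not just modulo something. Sending $a\to 1$ converts $(1-aq^n)(a-q^n)$ into a multiple of $\Phi_n(q)^2$. So the $q$-binomial theorem enters not as a summation formula yielding a product, but as the vanishing of a high-order finite difference applied to a polynomial of too-low degree; without the $a$-deformation you have no way to reach that situation.
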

\begin{proof}
Obviously,  the ratio  $(q^{d+r};q^d)_k^{d-r-1}/(q^d;q^d)_k^d$ contains the factor $(1-q^{(d-1)n})^2$ for $n-(n+r)/d\leq k\leq n-1$.
Besides, it is easy to find that    for $k\geq0$,
\begin{align*}
(q^r;q^d)_k^r(q^{r-d};q^d)_k=(1-q^{r-d})(1-q^r)^{r+1}(q^{d+r};q^d)_{k-2}^{r+1}(1-q^{dk-d+r})^r.
\end{align*}
Equivalently, we just prove the truth of the subsequent congruence,
\begin{align}\label{eq:equality of lemma1}
\sum_{k=0}^{n-1-(n+r)/d}\frac{(q^{d+r};q^d)_k^{d-r-1}(q^{d+r};q^d)_{k-2}^{r+1}(1-q^{dk-d+r})^rq^{dk}}{(q^d;q^d)_k^d}\equiv0\pmod{\Phi_n(q)^2}.
\end{align}
Before that, we  confirm
the following parametric generalizations of \eqref{eq:equality of lemma1}, which are both symmetric about $a$ and $a^{-1}$:
\begin{align}
&\sum_{k=0}^{n-1-(n+r)/d}\frac{(a^{d-1}q^{d+r},a^{d-3}q^{d+r},\cdots,a^{r+2}q^{d+r},a^{-r-2}q^{d+r},\cdots,a^{3-d}q^{d+r},a^{1-d}q^{d+r};q^d)_kq^{dk}}
{(q^d,a^{d-2}q^d,a^{d-4}q^d,\cdots,a^{r+3}q^d,a^{-r-1}q^d,\cdots,a^{4-d}q^d,a^{2-d}q^d;q^d)_k}\notag\\\label{eq:parametric form of equality of lemma1}
&\qquad\quad\times\frac{(a^rq^{d+r},a^{r-2}q^{d+r},\cdots,a^{-r+2}q^{d+r},a^{-r}q^{d+r};q^d)_{k-2}(1-q^{dk-d+r})^{r}}
{(a^{r+1}q^d,a^{r-1}q^d,\cdots,a^{-r+3}q^d,a^{-r+1}q^d;q^d)_k}\notag\\
&\qquad\equiv0
\pmod{(1-aq^n)(a-q^n)}
\end{align}
for $d+r\equiv 1\pmod{2}$, and
\begin{align}
&\sum_{k=0}^{n-1-(n+r)/d}\frac{(a^{d-1}q^{d+r},a^{d-3}q^{d+r},\cdots,a^{r+3}q^{d+r},q^{d+r},a^{-r-3}q^{d+r},\cdots,a^{3-d}q^{d+r},a^{1-d}q^{d+r};q^d)_k}
{(q^d,a^{d-2}q^d,a^{d-4}q^d,\cdots,a^{r+4}q^d,aq^d,a^{-r-2}q^d,\cdots,a^{4-d}q^d,a^{2-d}q^d;q^d)_kq^{-dk}}\notag\\   \label{d odd}
&\qquad\quad\times\frac{(a^{r+1}q^{d+r},a^{r-1}q^{d+r},\cdots,a^2q^{d+r},a^{-2}q^{d+r},\cdots,a^{-r+1}q^{d+r},a^{-r-1}q^{d+r};q^d)_{k-2}}
{(a^{r+2}q^d,a^rq^d,\cdots,a^3q^d,a^{-1}q^d,\cdots,a^{-r+2}q^d,a^{-r}q^d;q^d)_k(1-q^{dk-d+r})^{-r}}\notag\\
&\qquad\equiv0
\pmod{(1-aq^n)(a-q^n)}
\end{align}
for $d\equiv r\equiv 1\pmod{2}$.

Next, we concentrate on the proof of \eqref{eq:parametric form of equality of lemma1}. Since  $\gcd(d,r)=\gcd(d,n)=1$,
the denominator of the left-hand side of \eqref{eq:parametric form of equality of lemma1} does not contain the factor $1-aq^n$ or $a-q^n$.
Thus, for $a=q^{-n}$ or $a=q^n$, noticing that $(q^{d+r-(d-1)n};q^d)_k=0$ for $k>n-1-(n+r)/d$,    the left-hand side of \eqref{eq:parametric form of equality of lemma1}  equals
\begin{align}\label{polynomial}
&\sum_{k=0}^{n-1-(n+r)/d}\frac{(q^{d+r-(d-1)n},q^{d+r-(d-3)n},\cdots,q^{d+r-(r+2)n},q^{d+r+(r+2)n},\cdots,q^{d+r+(d-1)n};q^d)_kq^{dk}}
{(q^d,q^{d-(d-2)n},\cdots,q^{d-(r+3)n},q^{d+(r+1)n},\cdots,q^{d+(d-2)n};q^d)_k}\notag\\
&\qquad\quad\times\frac{(q^{d+r-rn},q^{d+r-(r-2)n},\cdots,q^{d+r+(r-2)n},q^{d+r+rn};q^d)_{k-2}(1-q^{dk-d+r})^{r}}
{(q^{d-(r+1)n},q^{d-(r-1)n},\cdots,q^{d+(r-3)n},q^{d+(r-1)n};q^d)_k}\notag\\
&\quad\quad=\sum_{k=0}^{n-1-(n+r)/d}(-1)^kq^{d\binom{n-1-(n+r)/d-k}{2}}{n-1-(n+r)/d\brack k}_{q^d}P(q^{dk}).
\end{align}
Here $P(q^{dk})$ is a polynomial in $q^{dk}$ of degree $n-2-(n+r)/d$ and
$$
{n\brack k}:={n\brack k}_q=\frac{(q;q)_n}{(q;q)_k(q;q)_{n-k}}
$$
denotes the $q$\emph{-binomial coefficient}.
In the derivation of \eqref{polynomial}, we have utilized the following formulas, which can be certified readily:
\begin{align*}
\frac{(q^{d+r-(d-1)n};q^d)_kq^{dk}}{(q^d;q^d)_k}&=(-1)^k{n-1-(n+r)/d\brack k}_{q^d}q^{d\binom{k}{2}+(n+2d+r-dn)k},
\\
d\binom{k}{2}+(n+2d+r-dn)k&=d\binom{n-1-(n+r)/d-k}{2}-d\binom{n-1-(n+r)/d}{2},
\end{align*}
for $1\leq{j}\leq {d-1}$ and $j\neq (d-r-1)/2,\cdots,(d+r-1)/2$
\begin{align}
\frac{(q^{d+r-(d-2j-1)n};q^d)_k}{(q^{d-(d-2j)n};q^d)_k}&=\frac{(q^{d-(d-2j)n+dk};q^d)_{(n+r)/d}}{(q^{d-(d-2j)n};q^d)_{(n+r)/d}},
\label{eq:3}
\end{align}
and for $(d-r-1)/2\leq{j}\leq (d+r-1)/2$
\begin{align}
\frac{(q^{d+r-(d-2j-1)n};q^d)_{k-2}}{(q^{d-(d-2j)n};q^d)_k}&=\frac{(q^{d-(d-2j)n+dk};q^d)_{(n+r)/d-2}}{(q^{d-(d-2j)n};q^d)_{(n+r)/d}}.
\label{eq:2}
\end{align}
We see that the right-hand sides of \eqref{eq:3} and \eqref{eq:2}  are polynomials in $q^{dk}$ of degree  $(n+r)/d$ and $(n+r)/d-2$ respectively.
Then, by the terminating form of $q$-binomial theorem (see, for example,  \cite[p. 36]{Andrews-qbinomial}):
\begin{align*}
\sum_{k=0}^{n}(-1)^k{n\brack k}q^{\binom{n-k}{2}+jk}=0 \quad\text{ for } 0\leq j \leq n-1,
\end{align*}
we deduce that the right-hand side of \eqref{polynomial} equals zero.
This implies that the congruence \eqref{eq:parametric form of equality of lemma1} holds modulo $(1-aq^n)(a-q^n)$.
Similarly, we are able to  prove    \eqref{d odd} and we omit the details here.

Apparently,   the limit  $(1-aq^n)(a-q^n)$  has the factor $\Phi_n(q)^2$ as $a\rightarrow 1$. Moreover, the denominators on the left-hand side of \eqref{eq:parametric form of equality of lemma1} and \eqref{d odd} are both coprime with  $\Phi_n(q)$ as $a\rightarrow 1$.   Hence, letting  $a\rightarrow 1$ in \eqref{eq:parametric form of equality of lemma1} and \eqref{d odd}, we are led to \eqref{eq:equality of lemma1}.  Thus,   we verify the correction of \eqref{eq:lemma1}.
\end{proof}

Now, we are ready to prove Theorems \ref{thm:2} and \ref{Theorem-Wang-Li-Tang}.
\begin{proof}[Proof of Theorem \ref{thm:2}]
It is routine to check that: for any  integer $d\geq 2$,
\begin{align}\label{23}
&\sum_{k=0}^{n-1}\frac{(q^{d+1};q^d)_k^{d-1}(q^{1-d};q^d)_kq^{dk}}{(q^d;q^d)_k^d}\notag\\
&=[d]\sum_{k=0}^{n-1}\frac{(q^{d+1};q^d)_k^{d-2}(q,q^{1-d};q^d)_kq^{dk}}{(q^d;q^d)_k^d}
-q[d-1]\sum_{k=0}^{n-1}\frac{(q^{d+1};q^d)_k^{d-2}(q;q^d)_k^2q^{dk}}{(q^d;q^d)_k^d}
,
\end{align}
where we have applied the following identity:
\begin{align*}
(q^{d+1},q^{1-d};q^d)_k=-q[d-1]\left(1+\frac{1-q^d}{q^d-q^{dk+1}}\right)(q;q^d)_k^2.
\end{align*}
Hence, from \eqref{similar to}, \eqref{r=1} and \eqref{23}, we arrive at Theorem \ref{thm:2}.
\end{proof}

\begin{proof}[Proof of Theorem \ref{Theorem-Wang-Li-Tang}]
At first, we claim that the following two $q$-congruences hold, which are the $d=2, 3$ cases of  \eqref{r=1}: for any integer  $n$,  modulo $\Phi_n(q)^2$,
\begin{align}\label{22}
\sum_{k=0}^{n-1}\frac{(q,q^{-1};q^2)_kq^{2k}}{(q^2;q^2)_k^2}
&\equiv 0\quad \text{if}~n\geq 3~\text{and}~n\equiv1\pmod 2,\\\label{33}
\sum_{k=0}^{n-1}\frac{(q^4,q,q^{-2},;q^3)_kq^{3k}}{(q^3;q^3)_k^3}
&\equiv 0\quad\text{if}~n\geq 5~\text{and}~n\equiv 2\pmod 3.
\end{align}
In fact, taking $d=2$ in \eqref{23}, we have, modulo $\Phi_n(q)^2$,
\begin{align}\label{231}
[2]\sum_{k=0}^{n-1}\frac{(q,q^{-1};q^2)_kq^{2k}}{(q^2;q^2)_k^2}
&=\sum_{k=0}^{n-1}\frac{(q^{3},q^{-1};q^2)_kq^{2k}}{(q^2;q^2)_k^2}+q\sum_{k=0}^{n-1}\frac{(q;q^2)_k^2q^{2k}}{(q^2;q^2)_k^2}\notag\\[1.5mm]
&\equiv (-1)^{(n+1)/2}q^{(n^2+3)/4}+q(-1)^{(n-1)/2}q^{(n^2-1)/4},
\end{align}
where   we have used a $q$-supercongruence from the work \cite[p. 9]{Guoqdrrama2022} and the $d=2$ case of \eqref{aa}.
Analogously, letting  $d=3$ in \eqref{23}, we have, modulo $\Phi_n(q)^2$,
\begin{align}\label{232}
[3]\sum_{k=0}^{n-1}\frac{(q^4,q,q^{-2};q^3)_kq^{3k}}{(q^3;q^3)_k^3}
&=\sum_{k=0}^{n-1}\frac{(q^{4},q^4,q^{-2};q^3)_kq^{3k}}{(q^3;q^3)_k^3}
+q[2]\sum_{k=0}^{n-1}\frac{(q^4,q,q;q^3)_kq^{3k}}{(q^3;q^3)_k^3}\notag\\
&\equiv \frac{(1-q)(1-q^2)(q^3;q^3)_{(2n-1)/3-1}}{-(q^3;q^3)_{(n+1)/3}^2}q^{(3(3+n)(n+1)-(n+1)^2)/6-1}\notag\\
&\quad+q[2]\frac{(1-q)^2(q^3;q^3)_{(2n-1)/3-1}}{(q^3;q^3)_{(n+1)/3}^2}q^{(3(3+n)(n+1)-(n+1)^2)/6-2},
\end{align}
where we have utilized   \eqref{odd} and \eqref{eq:conjecture 5.3} with  $d=3$.  Therefore, the $q$-results \eqref{22} and \eqref{33},  respectively,  follow   from \eqref{231} and \eqref{232}. In addition, we notice that: for any integer  $d\geq 2$,
\begin{align}\label{n}
\frac{(q^d;q^d)_{n-1}^d}{(1-q)^{dn-d}}=\prod_{1\leq m< n}\bigg(\frac{1-q^{md}}{1-q}\bigg)^d\equiv 0\pmod{\prod_{\substack{1< m<n \\ m\mid n}}\Phi_m(q)^2}.
\end{align}
Combining the following property
\begin{align*}
[n]=\Phi_n(q)\prod_{\substack{1< m<n \\ m\mid n}}\Phi_m(q)
\end{align*}
 with the  congruences   \eqref{r=1}, \eqref{22}, \eqref{33} and \eqref{n},
we  obtain Theorem \ref{Theorem-Wang-Li-Tang}.
\end{proof}

\section{Proof of Theorem \ref{theorem:1}}

\begin{proof}
We shall prove Theorem \ref{theorem:1} by using  Gasper's Karlsson-Minton type summation formula (see \cite[(1.9.9)]{GasperRahman2014} for   more general form): for nonnegative integers $N,n_1,\cdots,n_m$ with   $N=n_1+\cdots+n_m$,
\begin{align}
\sum_{k=0}^N\frac{(q^{-N},b_1q^{n_1},\cdots,b_mq^{n_m};q)_k}{(q,b_1,\cdots,b_m;q)_k}q^k
=(-1)^N\frac{(q;q)_Nb_1^{n_1}\cdots b_m^{n_m}}{(b_1;q)_{n_1}\cdots(b_m;q)_{n_m}}q^{\binom{n_1}{2}+\cdots+\binom{n_m}{2}}.
\label{eq:KM}
\end{align}
Below, the two conditions $d>3$ and $d=3$ of Theorem \ref{theorem:1} shall be discussed separately.\\
{$(\romannumeral1)$} For $d>3$,  we consider the following   generalization of \eqref{eq:conjecture 5.3}:  modulo $(1-aq^n)(a-q^n)$,
\begin{align}
&\sum_{k=0}^{n-1}\frac{(a^{d-1}q^{d+1},a^{d-3}q^{d+1},\cdots,a^4q^{d+1},a^2q,q^{d+1},a^{-2}q,a^{-4}q^{d+1},\cdots,a^{3-d}q^{d+1},a^{1-d}q^{d+1};q^d)_k}
{(a^{d-2}q^d,a^{d-4}q^d,\cdots,a^{4-d}q^d,a^{2-d}q^d;q^d)_k(q^d;q^d)_kq^{-dk}}\notag\\
&\equiv \frac{(1-a^2q)(1-q/a^{2})(q^d;q^d)_{n-1-(n+1)/d}}
{(a^{d-2}q^d,a^{d-4}q^d,\cdots,a^{4-d}q^d,a^{2-d}q^d;q^d)_{(n+1)/d}}q^{(d(d+n)(n+1)-(n+1)^2)/(2d)-2}.
\label{eq:d>3 parametric conjecture 5.3}
\end{align}
In fact,  we take the substitutions
$q\rightarrow{q^d}$, $m=d-1$, $ N=n-1-(n+1)/d$, $b_j=q^{d-(d-2j)n}$ $(1\leq j\leq d-1)$, $n_{(d-3)/2}=n_{(d+1)/2}=(n+r)/d-1$ and $n_j=(n+r)/d$ $(1\leq j\leq d-1$ and $j\neq (d-3)/2$, $(d+1)/2)$ in  \eqref{eq:KM}. Thus, for $a=q^{-n}$ or $a=q^n$, the left-hand side of \eqref{eq:d>3 parametric conjecture 5.3} can be simplified as
\begin{align*}
&\frac{(-1)^{n-1-(n+1)/d}q^{(d-1)(n+1)-2(d-n)+d(d-3)\binom{(n+1)/d}{2}+2d\binom{(n+1)/d-1}{2}}}
{(q^{d-(d-2)n},q^{d-(d-4)n},\cdots,q^{d-5n},q^{d-n},q^{d+3n},\cdots,q^{d+(d-4)n},q^{d+(d-2)n};q^d)_{(n+1)/d}}\notag\\
&\quad\times \frac{(q^d;q^d)_{n-1-(n+1)/d}}{(q^{d-3n},q^{d+n};q^d)_{(n+1)/d-1}}\notag\\
&=\frac{(1-q^{1-2n})(1-q^{1+2n})(q^d;q^d)_{n-1-(n+1)/d}q^{(d(d+n)(n+1)-(n+1)^2)/(2d)-2}}
{(q^{d-(d-2)n},q^{d-(d-4)n},\cdots,q^{d+(d-4)n},q^{d+(d-2)n};q^d)_{(n+1)/d}}.
\end{align*}
Namely, the $q$-congruence \eqref{eq:d>3 parametric conjecture 5.3} is true modulo $(1-aq^n)(a-q^n)$.
\\
{$(\romannumeral2)$} For $d=3$,  we regard the following $q$-congruence:   modulo $(1-aq^n)(a-q^n)$,
\begin{align}\label{eq:parametric d=3}
\sum_{k=0}^{n-1}\frac{(a^2q,q^4,q/a^2;q^3)_kq^{3k}}{(aq^3,q^3/a,q^3;q^3)_k}
\equiv\frac{(1-a^2q)(1-q/a^2)(q^3;q^3)_{n-1-(n+1)/3}}{(aq^3,q^3/a;q^3)_{(n+1)/3}}q^{(n^2+5n-2)/3}.
\end{align}
Actually, the $q$-congruence \eqref{eq:parametric d=3} follows by choosing
$q\rightarrow{q^3}$, $m=2$, $N=(2n-1)/3$, $b_1=q^{3-n}$, $b_2=q^{3+n}$, $n_1=(n+1)/3$ and $n_2=(n+1)/3-1$ in \eqref{eq:KM}.
Hence,   we finish the proof of  Theorem \ref{theorem:1}  by letting $a\rightarrow 1$ in \eqref{eq:d>3 parametric conjecture 5.3} and \eqref{eq:parametric d=3}.
\end{proof}

\section{The extensions of Theorems \ref{thm:2} and \ref{theorem:1}}
In this section, we present the common extension of \eqref{odd} and \eqref{eq:conjecture 5.2} and also that of  \eqref{similar to} and \eqref{eq:conjecture 5.3} with two parameters. Our discoveries are as follows.
\begin{theorem}\label{thm:4}
Let $d, r$ be positive integers with $d\geq3+r$ and $\gcd(d,r)=1$.  Let $n$ be an integer such that $n\equiv-r\pmod d$ and $n\geq 2d-r$. Then, modulo $\Phi_n(q)^2$,
\begin{align}\label{4.1}
\sum_{k=0}^{n-1}\frac{(q^{d+r};q^d)_k^{d-r}(q^r;q^d)_k^{r-1}(q^{r-d};q^d)_kq^{dk}}{(q^d;q^d)_k^d}
\equiv\frac{(1-q^r)^{r}(1-q^{d-r})(q^d;q^d)_{n-1-(n+r)/d}}{-(-1)^{n-1-(n+r)/d}(q^d;q^d)_{(n+r)/d}^{d-1}}q^{A(d,n,r)},
\end{align}
where $A(d,n,r)=[d(d+n)(n+r)+dn(r-1)-(n+r)^2]/(2d)-r(r+1)/2$. Moreover, the congruence \eqref{4.1} also holds for  $d=2, 3$   and  $r=1$.
\end{theorem}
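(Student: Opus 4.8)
The plan is to mimic the structure already used for Lemma~\ref{lemma:1} and Theorem~\ref{theorem:1}, namely to introduce an extra parameter $a$, prove a parametric $q$-congruence modulo $(1-aq^n)(a-q^n)$, and then let $a\to1$. First I would observe that the summand in \eqref{4.1} terminates early: the ratio $(q^{d+r};q^d)_k^{d-r}/(q^d;q^d)_k^d$ vanishes (to order at least one, in fact exactly one for this range) once $k>n-1-(n+r)/d$, because $(q^{d+r};q^d)_k$ acquires the factor $1-q^{dj-d+r}$ with $dj-d+r\equiv 0\pmod n$ for a suitable $j$. So the sum may be truncated at $N:=n-1-(n+r)/d$. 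Then I would use the same elementary rewriting as in the proof of Lemma~\ref{lemma:1}, pulling the factors $(q^r;q^d)_k^{r-1}(q^{r-d};q^d)_k$ into the form $(1-q^{r-d})(1-q^r)^{r}(q^{d+r};q^d)_{k-2}^{r}(1-q^{dk-d+r})^{r-1}$ (the analogue of the displayed identity there, with exponent shifted by one), so that after this manipulation the left side of \eqref{4.1} becomes a $q^d$-hypergeometric-type sum of the Karlsson--Minton shape.

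Second, I would set up the parametric version: replace the blocks $q^{d+r}$ and $q^d$ in numerator and denominator by products of $a^{\pm j}q^{d+r}$ and $a^{\pm j}q^d$ in a symmetric pattern about $a\leftrightarrow a^{-1}$, exactly as in \eqref{eq:parametric form of equality of lemma1} / \eqref{d odd} for the vanishing case and \eqref{eq:d>3 parametric conjecture 5.3} for the nonzero case, now with the ``$r$ copies of $q^r$'' block present. The claim is that this parametrized sum is congruent modulo $(1-aq^n)(a-q^n)$ to the $a$-deformation of the right-hand side of \eqref{4.1}, in which $(q^d;q^d)_{(n+r)/d}^{d-1}$ is replaced by $\prod (a^{\pm j}q^d;q^d)_{(n+r)/d}$. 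To verify this it suffices to check it at the two points $a=q^{-n}$ and $a=q^{n}$, since the two linear factors $1-aq^n$ and $a-q^n$ are coprime to the (cleared) denominator because $\gcd(d,n)=1$. At $a=q^{\pm n}$ the parametrized sum collapses to a genuinely terminating ${}_{d}\phi_{d-1}$ with base $q^d$ of Karlsson--Minton type, and I would apply Gasper's formula \eqref{eq:KM} with $q\to q^d$, $m=d-1$, $N=n-1-(n+r)/d$, $b_j=q^{\,d-(d-2j)n}$, and the integer shifts $n_j$ chosen as $(n+r)/d$ for the $d-r$ ``ordinary'' indices and $(n+r)/d-1$ for the $r$ indices coming from the $q^r$-block; a direct simplification of the right-hand side of \eqref{eq:KM} then produces precisely the evaluated right-hand side. (For $d=2,3$ with $r=1$ one instead argues as in the proof of Theorem~\ref{Theorem-Wang-Li-Tang}, combining the already-established low-$d$ cases \eqref{aa}, \eqref{similar to}, \eqref{odd}, \eqref{eq:conjecture 5.3} with the contiguous-relation identity \eqref{23}; this is routine once the main case is done.)

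Finally I would let $a\to1$. The factor $(1-aq^n)(a-q^n)$ tends to $(1-q^n)^2$, which is divisible by $\Phi_n(q)^2$; and the cleared denominators on the left, together with the product $\prod(a^{\pm j}q^d;q^d)_{(n+r)/d}$ on the right, remain coprime to $\Phi_n(q)$ in the limit since none of the exponents $d,\,d\pm2d,\dots$ in those $q$-shifted factorials is a multiple of $n$ (again using $\gcd(d,n)=1$ and $n\geq 2d-r$ to control ranges). So the parametric congruence specializes to \eqref{4.1}. The last bookkeeping step is to confirm that the exponent of $q$ coming out of Gasper's formula, namely $(d-r)$ times the relevant arithmetic progression sum plus the $\binom{n_j}{2}$ contributions plus the overall $q^{dk}$ shift, simplifies to exactly $A(d,n,r)=[d(d+n)(n+r)+dn(r-1)-(n+r)^2]/(2d)-r(r+1)/2$, and that the sign $(-1)^{n-1-(n+r)/d}$ and the prefactor $(1-q^r)^r(1-q^{d-r})$ match.

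I expect the main obstacle to be the exponent and sign bookkeeping: getting the pattern of $a^{\pm j}$ shifts in the parametric sum exactly right (so that it is manifestly $a\leftrightarrow a^{-1}$ symmetric and reduces correctly at $a=q^{\pm n}$), and then verifying that the combination of $\binom{n_j}{2}$ terms, the $b_j^{n_j}$ factors, and the geometric-series shift from $q^{dk}$ in Gasper's formula telescopes to the stated closed form $A(d,n,r)$. The conceptual content — early termination, reduction to two evaluation points, application of \eqref{eq:KM} — is already in place from Lemma~\ref{lemma:1} and Theorem~\ref{theorem:1}; the new work is essentially the uniform-in-$r$ version of that computation, plus handling the boundary cases $d=2,3$, $r=1$ via \eqref{23} as in the proof of Theorem~\ref{Theorem-Wang-Li-Tang}.
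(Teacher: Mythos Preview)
Your direct Karlsson--Minton approach is \emph{not} what the paper does. The paper proves Theorem~\ref{thm:3} first (that is \eqref{4.2}, the sum with $(q^r;q^d)_k^{r+1}$ and no $q^{r-d}$ factor) by creative microscoping plus \eqref{eq:KM}, and then derives Theorem~\ref{thm:4} from it by a contiguous relation: writing
\[
(q^{d+r},q^{r-d};q^d)_k=-q^r\,\frac{[d-r]}{[r]}\Bigl(1+\frac{1-q^d}{q^d-q^{dk+r}}\Bigr)(q^r;q^d)_k^{\,2}
\]
splits the left side of \eqref{4.1} into $\frac{[d]}{[r]}$ times the sum of Lemma~\ref{lemma:1} (which is $\equiv 0$) minus $q^r\frac{[d-r]}{[r]}$ times the sum in \eqref{4.2}. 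So the paper never applies \eqref{eq:KM} to \eqref{4.1} directly; the nonzero right-hand side is inherited from \eqref{4.2}.

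Your proposed direct route has a concrete gap. After your rewriting, the summand carries the extra polynomial factor $(1-q^{dk-d+r})^{\,r-1}$, and for $r>1$ this destroys the Karlsson--Minton shape: \eqref{eq:KM} does not apply to a sum with such an extraneous $k$-dependent factor. Moreover, your stated $n_j$'s --- $(n+r)/d$ with multiplicity $d-r$ and $(n+r)/d-1$ with multiplicity $r$ --- total $d$ parameters rather than $d-1$, and even after dropping one for the terminating factor the sum $(d-r-1)\frac{n+r}{d}+r\bigl(\frac{n+r}{d}-1\bigr)$ equals $N+1$, not $N=n-1-(n+r)/d$, so the hypothesis $N=\sum n_j$ of \eqref{eq:KM} fails. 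In effect you have written down the data for Theorem~\ref{thm:3} (which has $r+1$ copies of $q^r$ and no $q^{r-d}$), not for Theorem~\ref{thm:4}.

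A direct approach \emph{can} be made to work, but you must skip the rewriting and keep the three blocks $q^{d+r}$, $q^r$, $q^{r-d}$ separate: after the $a$-deformation and evaluation at $a=q^{\pm n}$ these produce shifts $(n+r)/d$, $(n+r)/d-1$, $(n+r)/d-2$ with multiplicities $d-r-1$, $r-1$, $1$, which do sum to $N$ (and this is exactly where the hypothesis $n\ge 2d-r$ enters, to keep $(n+r)/d-2\ge 0$). That said, the paper's indirect argument is shorter, because Lemma~\ref{lemma:1} and Theorem~\ref{thm:3} are already available and the contiguous identity is a one-line computation; your route would require repeating the full parity-case analysis of the $a$-deformations with the additional $q^{r-d}$ slot. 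For the boundary cases $d=2,3$, $r=1$ your plan (combine \eqref{aa}, \eqref{odd}, \eqref{similar to}, \eqref{eq:conjecture 5.3} with \eqref{23}) matches the paper.
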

\begin{theorem}\label{thm:3}
Let $d,  r$ be positive integers with $d>r$ and  $\gcd(d,r)=1$. Let $n>1$ be a positive integer   such that  $n\equiv-r\pmod d$. Then, modulo $\Phi_n(q)^2$,
\begin{align}\label{4.2}
\sum_{k=0}^{n-1}\frac{(q^{d+r};q^d)_k^{d-r-1}(q^r;q^d)_k^{r+1}q^{dk}}{(q^d;q^d)_k^d}
\equiv\frac{(1-q^r)^{r+1}(q^d;q^d)_{n-1-(n+r)/d}}{(-1)^{n-1-(n+r)/d}(q^d;q^d)_{(n+r)/d}^{d-1}}q^{A(d,n,r)-r},
\end{align}
where $A(d,n,r)$ is stated in Theorem \ref{thm:4}.
\end{theorem}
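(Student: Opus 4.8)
The plan is to prove Theorem \ref{thm:3} by the creative microscoping method of Guo and Zudilin, exactly paralleling the proof of Theorem \ref{theorem:1} but now carrying \emph{two} auxiliary parameters simultaneously. First I would introduce parameters $a$ and $b$ and seek a $q$-congruence of the shape
\begin{align*}
\sum_{k=0}^{n-1}\frac{(aq^r,q^r/a,bq^{d+r},q^{d+r}/b,\,\text{(the remaining $d-r-3$ balanced pairs)};q^d)_k\,q^{dk}}{(q^d,\text{(the $d-1$ companion pairs)};q^d)_k}
\end{align*}
which reduces, modulo the product $(1-aq^n)(a-q^n)$ and modulo $(1-bq^n)(b-q^n)$, to a closed form, and which specializes to the left side of \eqref{4.2} when $a\to 1$ and $b\to 1$. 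The key structural fact to exploit is the identity used already in Lemma \ref{lemma:1}, namely $(q^r;q^d)_k^{r+1}=(1-q^r)^{r+1}(q^{d+r};q^d)_{k-2}^{r+1}(1-q^{dk-d+r})^{r+1}/(q^{d+r};q^d)_{-2}^{r+1}$-type shifts, which lets us absorb the extra factors $(q^r;q^d)_k$ into shifted $(q^{d+r};q^d)$-symbols; this is what makes the summand amenable to a Karlsson–Minton evaluation after the substitutions $a=q^{\pm n}$, $b=q^{\pm n}$.

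The core step is to verify the parametric identity when $a$ or $b$ equals $q^{\pm n}$. Taking $a=q^{-n}$ (say), the factor $(aq^r;q^d)_k=(q^{r-n};q^d)_k$ forces the sum to terminate at $k\le n-1-(n+r)/d$ because $r-n+d\cdot\big((n+r)/d-1\big)=0$, so we may apply Gasper's formula \eqref{eq:KM} with $q\mapsto q^d$, $N=n-1-(n+r)/d$, and $m=d-1$ Karlsson–Minton-type numerator/denominator pairs $b_jq^{n_j}/b_j$, where the exponents $n_j$ are $(n+r)/d$ for most indices and $(n+r)/d-1$ for the two indices coming from the $(q^r;q^d)$ blocks (this is where the $-2$ shift and the factor $(1-q^{dk-d+r})^{r+1}$ get encoded, since $\binom{n_j-1}{2}$ versus $\binom{n_j}{2}$ produces precisely the power-of-$q$ discrepancy recorded in $A(d,n,r)-r$). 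Carrying out the substitution and simplifying the resulting $(b_j;q^d)_{n_j}$ denominators and the $q^{\sum\binom{n_j}{2}}$ factor should reproduce the right-hand side of the parametric congruence, and since the left-hand side denominators are coprime to $(1-aq^n)(a-q^n)$ and to $(1-bq^n)(b-q^n)$ (again because $\gcd(d,n)=1$), the congruence follows modulo the full quartic product. Then letting $a\to1$ and $b\to1$, using that this product acquires the factor $\Phi_n(q)^2$ and that $\Phi_n(q)$ divides neither denominator in the limit, yields \eqref{4.2}.

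The main obstacle I anticipate is bookkeeping: choosing the $d-1$ Karlsson–Minton pairs and the associated $n_j$ so that the \emph{same} parametric sum specializes correctly at \emph{both} $a=q^{\pm n}$ and $b=q^{\pm n}$, while also collapsing to the plain sum in \eqref{4.2} at $a=b=1$ — in particular making sure the symmetry $a\leftrightarrow a^{-1}$, $b\leftrightarrow b^{-1}$ holds so that the two quadratic factors each divide the difference. A secondary subtlety is verifying the exact exponent $A(d,n,r)-r$; I would compute $\sum_j\binom{n_j}{2}$ with the two `defective' indices contributing $\binom{(n+r)/d-1}{2}$ and the rest $\binom{(n+r)/d}{2}$, combine with the $b_j^{n_j}=q^{(d-(d-2j)n)n_j}$ contributions and the overall sign $(-1)^N$, and check it matches the stated formula — this is routine but error-prone. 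Finally, for the boundary handling one should confirm, as in the proof of Theorem \ref{theorem:1}, that no degenerate coincidences among the $b_j$ occur for $d>r\ge1$ with $\gcd(d,r)=1$; if a clash does appear for small $d$ (analogous to the separate $d=3$ case there), I would treat those cases by a direct specialization of \eqref{eq:KM} with fewer pairs.
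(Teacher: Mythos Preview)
Your overall strategy---creative microscoping combined with Gasper's Karlsson--Minton summation \eqref{eq:KM}---is exactly what the paper does, but the execution in your proposal has a genuine gap that would block the argument as written.

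The paper does \emph{not} introduce a second auxiliary parameter~$b$; a single parameter~$a$ suffices, inserted as the full ladder of powers $a^{d-1},a^{d-3},\ldots,a^{1-d}$ distributed over the $d$ numerator factors and the $d-1$ non-trivial denominator factors (with a parity-of-$d,r$ case split into four subcases).  The crucial point is that with this choice, setting $a=q^{\pm n}$ makes the factor $(a^{d-1}q^{d+r};q^d)_k=(q^{d+r-(d-1)n};q^d)_k$ vanish for $k>n-1-(n+r)/d$, which is precisely the termination needed to invoke \eqref{eq:KM} with $N=n-1-(n+r)/d$.  Your parametrization $(aq^r;q^d)_k$ does not terminate: your displayed check $r-n+d\big((n+r)/d-1\big)=0$ actually evaluates to $2r-d$, which is nonzero for all admissible $(d,r)$ except $d=2,r=1$.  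Since $n\equiv -r\pmod d$ gives $d\mid(n+r)$ but not $d\mid(n-r)$ (as $\gcd(d,r)=1$ forces $d\nmid 2r$ for $d>2$), neither $(q^{r-n};q^d)_k$ nor $(q^{r+n};q^d)_k$ ever hits zero, so Karlsson--Minton cannot be applied.

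Two smaller corrections: there are $r+1$ ``defective'' indices with $n_j=(n+r)/d-1$, not two (you have $(q^r;q^d)_k^{r+1}$, not $(q^r;q^d)_k^{2}$; the case $r=1$ in Theorem~\ref{theorem:1} is misleading here), and the shift identity you invoke should be $(q^r;q^d)_k=(1-q^r)(q^{d+r};q^d)_{k-1}$, with $k-1$ rather than $k-2$; in fact the paper bypasses this identity entirely for Theorem~\ref{thm:3}, placing $a^{j}q^{r}$ directly in $r+1$ of the numerator slots.  Once you switch to the single-parameter ladder and take $r+1$ reduced $n_j$'s, the Karlsson--Minton evaluation and the exponent bookkeeping for $A(d,n,r)-r$ go through as you outline.
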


 Clearly,  taking $r=1$ in Theorems \ref{thm:4} and \ref{thm:3}, we can easily attain \eqref{odd}-\eqref{eq:conjecture 5.3}   on the basis of  the parity of $d$.

In fact, the proof of Theorem \ref{thm:4} depends on the relationship between \eqref{4.1} and  \eqref{4.2}. So, we   first prove Theorem \ref{thm:3} via Karlsson-Minton type summation \eqref{eq:KM} again.
\begin{proof}[Sketch of proof of Theorem \ref{thm:3}]
Totally classified by the parities of $d$ and $r$, we    discuss  the following four $q$-congruences \eqref{2(a)}--\eqref{eq:parametric generalization conjecture 5.3 when d=r+2}, which are all symmetric with respect to $a$ and $a^{-1}$.\\
{$(\romannumeral1)$} For $d+r\equiv 1\pmod{2}$,     modulo $(1-aq^n)(a-q^n)$,
\begin{align}\label{2(a)}
&\sum_{k=0}^{n-1}\frac{(a^{d-1}q^{d+r},a^{d-3}q^{d+r},\cdots,a^{r+2}q^{d+r},a^rq^r,a^{r-2}q^r,\cdots,a^{-r}q^r,\cdots,a^{1-d}q^{d+r};q^d)_k}
{(a^{d-2}q^d,a^{d-4}q^d,\cdots,a^{4-d}q^d,a^{2-d}q^d;q^d)_k(q^d;q^d)_kq^{-dk}}\notag\\
&\equiv \frac{(1-a^{r}q^r)(1-a^{r-2}q^r)\cdots(1-a^{-r}q^r)(q^d;q^d)_{n-1-(n+r)/d}}
{(-1)^{n-1-(n+r)/d}(a^{d-2}q^d,a^{d-4}q^d,\cdots,a^{4-d}q^d,a^{2-d}q^d;q^d)_{(n+r)/d}}q^{A(d,n,r)-r}
\end{align}
with $d-r\geq 3$, and
\begin{align}\label{2(b)}
\sum_{k=0}^{n-1}\frac{(a^{d-1}q^{d-1},a^{d-3}q^{d-1},\cdots,a^{3-d}q^{d-1},a^{1-d}q^{d-1};q^d)_kq^{dk}}
{(q^d,a^{d-2}q^d,a^{d-4}q^d,\cdots,a^{4-d}q^d,a^{2-d}q^d;q^d)_k}
\equiv B_q(d,n,d-1)
\end{align}
with $d-r=1$. Here $B_q(d,n,r)$ denotes the right-hand side of \eqref{2(a)}.\\
{$(\romannumeral2)$} For $d\equiv r\equiv 1\pmod{2}$,  modulo $(1-aq^n)(a-q^n)$,
\begin{align}\label{eq:parametric generalization conjecture 5.3}
&\sum_{k=0}^{n-1}\frac{(a^{d-1}q^{d+r},a^{d-3}q^{d+r},\cdots,a^{r+3}q^{d+r},a^{r+1}q^r,\cdots,a^2q^r,q^{d+r},a^{-2}q^r,\cdots,a^{1-d}q^{d+r};q^d)_k}
{(a^{d-2}q^d,a^{d-4}q^d,\cdots,a^{4-d}q^d,a^{2-d}q^d;q^d)_k(q^d;q^d)_kq^{-dk}}\notag\\
&\equiv C_q(d,n,r)\qquad\qquad\qquad\qquad\qquad\qquad\qquad\qquad\quad\text{with $d-r\geq 4$},\\
&\sum_{k=0}^{n-1}\frac{(a^{d-1}q^{d-2},a^{d-3}q^{d-2},\cdots,a^2q^{d-2},q^{2d-2},a^{-2}q^{d-2},\cdots,a^{1-d}q^{d-2};q^d)_kq^{dk}}
{(a^{d-2}q^d,a^{d-4}q^d,\cdots,a^{4-d}q^d,a^{2-d}q^d;q^d)_k(q^d;q^d)_k}\notag\\
&\equiv C_q(d,n,d-2)\qquad\qquad\qquad\qquad\qquad\quad\qquad\qquad\text{with $d-r=2$},\label{eq:parametric generalization conjecture 5.3 when d=r+2}
\end{align}
 where $$
C_q(d,n,r)=
\frac{(q^d;q^d)_{n-1-(n+r)/d} q^{A(d,n,r)-r}}
{(a^{d-2}q^d,a^{d-4}q^d,\cdots,a^{2-d}q^d;q^d)_{(n+r)/d}}\prod_{j=1}^{(r+1)/2}(1-a^{2j}q^r)(1-a^{-2j}q^r).
$$
Actullay, the proof of per condition needs the help of   Karlsson-Minton type summation \eqref{eq:KM}. Taking the congruence \eqref{2(a)} for example,  it follows by letting
$q\rightarrow{q^d}$, $m=d-1$, $N=n-1-(n+r)/d$, $b_j=q^{d-(d-2j)n}$\text{ }$(1\leq j\leq d-1)$, and
\begin{align*}
n_j=
\begin{cases} (n+r)/d-1 &(d-r-1)/2\leq j \leq (d+r-1)/2 \\[4pt] (n+r)/d  &1 \leq j < (d-r-1)/2 \:\: \text{ and } \:\: (d+r-1)/2 < j \leq d-1
\end{cases}
\end{align*}
in \eqref{eq:KM}. Similarly, the other three congruences \eqref{2(b)}-\eqref{eq:parametric generalization conjecture 5.3 when d=r+2} can also be  proved and we will not present the details here.
Hence, letting $a\rightarrow 1$ in \eqref{2(a)}--\eqref{eq:parametric generalization conjecture 5.3 when d=r+2} and making an integration, we are led to Theorem \ref{thm:3}.
\end{proof}
\begin{proof}[Proof of Theorem \ref{thm:4}]
In view of
\begin{align*}
(q^{d+r},q^{r-d};q^d)_k=-q^r\frac{[d-r]}{[r]}\left(1+\frac{1-q^d}{q^d-q^{dk+r}}\right)(q^r;q^d)_k^2,
\end{align*}
we investigate the   relation  between \eqref{4.1} and  \eqref{4.2}: for    integers $d> r\geq1$,
\begin{align*}
&\sum_{k=0}^{n-1}\frac{(q^{d+r};q^d)_k^{d-r}(q^r;q^d)_k^{r-1}(q^{r-d};q^d)_kq^{dk}}{(q^d;q^d)_k^d}\notag\\
&=\frac{[d]}{[r]}\sum_{k=0}^{n-1}\frac{(q^{d+r};q^d)_k^{d-r-1}(q^r;q^d)_k^r(q^{r-d};q^d)_kq^{dk}}{(q^d;q^d)_k^d}
-\frac{[d-r]}{q^{-r}[r]}\sum_{k=0}^{n-1}\frac{(q^{d+r};q^d)_k^{d-r-1}(q^r;q^d)_k^{r+1}q^{dk}}{(q^d;q^d)_k^d}.
\end{align*}
Hence, from \eqref{eq:lemma1}, \eqref{22}, \eqref{33}, Theorem \ref{thm:3} and the above result, we acquire Theorem \ref{thm:4}.
\end{proof}

Particularly, letting $n$ be  a prime  $p\geq 5$ and $q\rightarrow 1$ in Theorems \ref{thm:4} and \ref{thm:3},
we gain the following conclusion.

\begin{corollary}\label{corol:3}
{$(\romannumeral1)$}
Let $d$, $r$ be positive integers with  $d\geq 3+r$ and  $\gcd(d,r)=1$. Then, for any   prime $p\geq  2d-r$ with $p\equiv-r \pmod d$,
\begin{align*}
\sum_{k=0}^{p-1}\frac{(\frac{d+r}{d})_k^{d-r}(\frac{r}{d})_k^{r-1}(\frac{r-d}{d})_k}{k!^d}
\equiv \tfrac{d-r}{d}\big(\tfrac{r}{d}\big)^{r}\Gamma_p \big(-\tfrac{r}{d} \big)^d
\pmod{p^2}.
\end{align*}
{$(\romannumeral2)$}
Let $d$, $r$ be positive integers with  $d>r$ and  $\gcd(d,r)=1$. Then, for any prime $p\geq 5$ with $p\equiv-r \pmod d$,
\begin{align*} \sum_{k=0}^{p-1}\frac{(\frac{d+r}{d})_k^{d-r-1}(\frac{r}{d})_k^{r+1}}{k!^d}
\equiv -\big(\tfrac{r}{d}\big)^{r+1}\Gamma_p \big(-\tfrac{r}{d} \big)^d
\pmod{p^2}.
\end{align*}
\end{corollary}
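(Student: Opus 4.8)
The plan is to put $n=p$ in Theorems~\ref{thm:4} and~\ref{thm:3} and then let $q\to 1$. For a prime $p\geq 2d-r$ (resp.\ $p\geq 5$) with $p\equiv -r\pmod d$ the hypotheses of Theorem~\ref{thm:4} (resp.\ Theorem~\ref{thm:3}) hold, so \eqref{4.1} (resp.\ \eqref{4.2}) is valid modulo $\Phi_p(q)^2$. Note that $p\nmid d$: reducing $p\equiv -r\pmod d$ modulo $p$ would otherwise force $p\mid r$, hence $p\mid\gcd(d,r)=1$. Put $s=(p+r)/d\in\Z$, so that $1\leq s<p$, $p-1-s\geq 0$ and $(d-1)s-r-1=p-1-s$. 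Every factor $1-q^{dj}$ occurring in the denominators on either side of \eqref{4.1}--\eqref{4.2} has $1\leq j\leq s<p$ with $p\nmid dj$, so those denominators are coprime to $\Phi_p(q)$; thus both sides of each congruence, as well as their (rational) difference divided by $\Phi_p(q)^2$, are regular at $q=1$ and $p$-adically integral there. Since $\Phi_p(1)=p$, specialising at $q=1$ turns each congruence into one modulo $p^2$ in $\Z_p$.

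First I would take the limits. On the left, $\lim_{q\to 1}(q^{a};q^d)_k/(q^d;q^d)_k=(a/d)_k/k!$ for every $k$, so the left-hand sides of \eqref{4.1} and \eqref{4.2} tend respectively to the sums
\[
\sum_{k=0}^{p-1}\frac{(\tfrac{d+r}{d})_k^{d-r}(\tfrac{r}{d})_k^{r-1}(\tfrac{r-d}{d})_k}{k!^d}\qquad\text{and}\qquad
\sum_{k=0}^{p-1}\frac{(\tfrac{d+r}{d})_k^{d-r-1}(\tfrac{r}{d})_k^{r+1}}{k!^d}
\]
appearing in Corollary~\ref{corol:3}. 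On the right, using $\lim_{q\to 1}(q^d;q^d)_m/(1-q)^m=d^m m!$ and $\lim_{q\to 1}(1-q^a)/(1-q)=a$, observing that in each case the orders of vanishing at $q=1$ of numerator and denominator are both $p+r-s$ (hence cancel), that $q^{A(d,n,r)},q^{A(d,n,r)-r}\to 1$, and that the net power of $d$ simplifies to $d^{-r-1}$ via $sd=p+r$, a short computation shows that the right-hand sides of \eqref{4.1} and \eqref{4.2} tend to
\[
L_1=\frac{(d-r)r^r}{d^{r+1}}\cdot\frac{(-1)^{p-s}(p-1-s)!}{(s!)^{d-1}}\qquad\text{and}\qquad
L_2=-\frac{r^{r+1}}{d^{r+1}}\cdot\frac{(-1)^{p-s}(p-1-s)!}{(s!)^{d-1}},
\]
where one uses $1/\bigl(-(-1)^{p-1-s}\bigr)=(-1)^{p-s}$ and $1/(-1)^{p-1-s}=-(-1)^{p-s}$.

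It remains to recognise $L_1,L_2$ modulo $p^2$. Since $0\leq p-1-s<p$ and $0\leq s<p$, the elementary evaluations $\Gamma_p(p-s)=(-1)^{p-s}(p-1-s)!$ and $\Gamma_p(-s)=1/s!$ (both immediate from the definition and functional equation of $\Gamma_p$) give $L_1=\tfrac{(d-r)r^r}{d^{r+1}}\,\Gamma_p(p-s)\,\Gamma_p(-s)^{d-1}$ and $L_2=-\tfrac{r^{r+1}}{d^{r+1}}\,\Gamma_p(p-s)\,\Gamma_p(-s)^{d-1}$. Since $ds=p+r$ we have $-r/d=-s+p/d$ with $p/d\in p\Z_p$, so by the standard first-order $p$-adic expansion $\Gamma_p(a+pb)\equiv\Gamma_p(a)\bigl(1+pb\,\Gamma_p'(a)/\Gamma_p(a)\bigr)\pmod{p^2}$ applied at $a=-s$,
\[
\Gamma_p(-r/d)^d\equiv\Gamma_p(-s)^d\Bigl(1+\tfrac pd\,\tfrac{\Gamma_p'(-s)}{\Gamma_p(-s)}\Bigr)^{d}\equiv\Gamma_p(-s)^d\Bigl(1+p\,\tfrac{\Gamma_p'(-s)}{\Gamma_p(-s)}\Bigr)\equiv\Gamma_p(p-s)\,\Gamma_p(-s)^{d-1}\pmod{p^2},
\]
the last step being the same expansion with $b=1$. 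Substituting this into $L_1$ and $L_2$ produces $\tfrac{d-r}{d}\bigl(\tfrac rd\bigr)^r\Gamma_p(-r/d)^d$ and $-\bigl(\tfrac rd\bigr)^{r+1}\Gamma_p(-r/d)^d$, i.e.\ exactly the right-hand sides of Corollary~\ref{corol:3}(i) and (ii). The step I expect to be the main obstacle is precisely this last identification: it relies not merely on the $p$-adic continuity of $\Gamma_p$ but on its first-order behaviour modulo $p^2$, together with careful bookkeeping of the sign $(-1)^{p-s}$ through the factorial-to-$\Gamma_p$ conversions; all the remaining steps are routine $q\to 1$ degenerations.
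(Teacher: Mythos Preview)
Your proposal is correct and follows essentially the same approach as the paper: specialise Theorems~\ref{thm:4} and~\ref{thm:3} at $n=p$, let $q\to 1$, and then identify the resulting factorial quotient $(-1)^{p-s}(p-1-s)!/(s!)^{d-1}$ (with $s=(p+r)/d$) with $\Gamma_p(-r/d)^d$ modulo~$p^2$. The only difference is in presentation: the paper simply states the identity
\[
\frac{(p-1-s)!}{(s!)^{d-1}}\equiv -(-1)^{s}\,\Gamma_p\!\left(-\tfrac{r}{d}\right)^d\pmod{p^2}
\]
and refers to \cite[(4.4), (4.8)]{Guoqdrrama2022} for the method, whereas you supply an explicit derivation via the first-order $p$-adic Taylor expansion $\Gamma_p(a+pb)\equiv\Gamma_p(a)\bigl(1+pb\,\Gamma_p'(a)/\Gamma_p(a)\bigr)\pmod{p^2}$ applied at $a=-s$ with $b=1/d$ and $b=1$.
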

In the derivation of Corollary \ref{corol:3}, we have used the following formula: for any positive integers  $d, r$ with $d>r$ and prime $p\geq5$  with $p\equiv-r\pmod d$,
\begin{align*}
\frac{\big(p-1-\frac{p+r}{d}\big)!}{(\frac{p+r}{d})!^{d-1}}
\equiv-(-1)^{\frac{p+r}{d}}\Gamma_p \big(-\tfrac{r}{d} \big)^d
\pmod{p^2},
\end{align*}
which   can be proved like the formulas  \cite[(4.4) and (4.8)]{Guoqdrrama2022} by means of some basic properties of the $p$-adic Gamma function.


\begin{thebibliography}{10}

\bibitem{Andrews-qbinomial}
G.E. Andrews, The Theory of Partitions, Cambridge University Press, Cambridge, 1998.

\bibitem{specialfunction}
G.E. Andrews, R. Askey, R. Roy, Special Functions, Encyclopedia Math. Appl. 71, Cambridge University, Cambridge, 1999.

\bibitem{Deines}
A. Deines, J.G. Fuselier, L. Long, H. Swisher, F.-T. Tu, Hypergeometric series, truncated hypergeometric series, and Gaussian hypergeomtric functions, Directions in Number Theory, vol. 3, Assoc. Women Math. Ser., Springer, New York, 2016,  pp. 125-159.



\bibitem{GasperRahman2014}
G. Gasper, M. Rahman, Basic Hypergeometric Series, second edition, Encyclopedia of Mathematics and its Applications 96, Cambridge University Press, Cambridge, 2004.

\bibitem{GuoA2}
V.J.W. Guo, A new extension of the (A.2) supercongruence of Van Hamme, Results Math. 77 (2022), Art. 96.




\bibitem{Guoqdrrama2022}
V.J.W. Guo, Some $q$-supercongruences from Gasper's Karlsson-Minton type summation, Ramanujan J. 60 (2023), 825-835.

\bibitem{glm}
V.J.W. Guo,  J.-C.  Liu, M. J. Schlosser, An extension of a supercongruence of Long and Ramakrishna, Proc. Amer. Math. Soc. 151 (2023), 1157-1166.

\bibitem{gn}
V.J.W. Guo,  H.-X. Ni, Further generalizations of four supercongruences of Rodriguez-Villegas, Rev. R. Acad. Cienc. Exactas F\'{\i}s. Nat., Ser. A Mat. 117 (2023), Art. 49.




\bibitem{GuoSchlosser}
V.J.W. Guo, M.J. Schlosser, A family of $q$-supercongruences modulo the cube of a cyclotomic polynomial, Bull. Aust. Math. Soc. 105 (2022), 296-302.







\bibitem{GuoZhen2014}
V.J.W. Guo, J. Zeng, Some $q$-analogues of supercongruences of Rodriguez-Villegas, J. Number Theory 145 (2014), 301-316.

\bibitem{qmicroscope2019}
V.J.W. Guo, W. Zudilin, A $q$-microscope for supercongruences, Adv. Math. 346 (2019), 329-358.



\bibitem{liuwang22}
Y. Liu, X. Wang, Further $q$-analogues of the (G.2) supercongruence of Van Hamme,
Ramanujan J. 59 (2022), 791-802.

\bibitem{Mortenson}
E. Mortenson, A supercongruence conjecture of Rodriguez-Villegas for a certain truncated hypergeometric function, J. Number Theory 99 (2003), 139-147.

\bibitem{RodriguezVillegas}
F. Rodriguez-Villegas, Hypergeometric families of Calabi-Yau manifolds, in: Calabi-Yau Varieties and Mirror Symmetry (Toronto, ON, 2001), Fields Inst. Commun., 38, Amer. Math. Soc., Providence, RI, 2003, pp. 223-231.

\bibitem{WangLiTang}
C. Wang, J. Li, Y. Tang, Some congruences from the Karlsson-Minton summation formula, preprint, (2022), arXiv.2208.06650.

\bibitem{WangPan}
C. Wang, H. Pan, Supercongruences concerning truncated hypergeometric series, Math. Z. 300 (2022), 161-177.

\bibitem{XuWang1}
X. Wang, C. Xu,  $q$-Supercongruences on triple and quadruple sums, Results Math. 78 (2023), Art. 27.

\bibitem{WangYu}
X. Wang, M. Yu, A generalization of a supercongruence on the truncated Appell series $F_3$, Bull. Aust. Math. Soc. 107 (2023),   296-303.


\bibitem{Wei}
C. Wei, A $q$-supercongruence from a q-analogue of Whipple's $_3F_2$
 summation formula, J. Combin. Theory, Ser. A 194 (2023), Art. 105705.

\bibitem{Weica}
C. Wei,  q-Supercongruences from Gasper and Rahman's summation formula, Adv. Appl.
Math. 139 (2022), Art. 102376.

\bibitem{XuWang}
C. Xu, X. Wang, Proofs of Guo and Schlosser's two conjectures,
Period. Math. Hungar. 85 (2022), 474-480.


\end{thebibliography}
\end{document}